\newif\ifprinterfriendly\printerfriendlytrue
\patchcmd{\section}{\scshape}{\bfseries}{}{} \makeatletter
\renewcommand{\@secnumfont}{\bfseries} \makeatother
\theoremstyle{plain} \newtheorem{theorem}{Theorem}
\theoremstyle{plain} \newtheorem{lemma}[theorem]{Lemma}
\theoremstyle{plain} 
\theoremstyle{plain} \newtheorem{corollary}[theorem]{Corollary}
\theoremstyle{definition} 
\theoremstyle{definition} \newtheorem*{remark*}{Remark}
\newcommand{\allowcomments}[3]{
\newcommand{#1}[1]{\ifprinterfriendly{\textsl{ ##1 \--#2}}\else{\textcolor{#3}{##1}}\fi}
}
\allowcomments{\comfelipe}{FR}{magenta}
\allowcomments{\comdavid}{DS}{Green}
\allowcomments{\comfabian}{FS}{blue}
\newcommand{\NN}{\mathbb{N}}
\newcommand{\RR}{\mathbb{R}}
\newcommand{\QQ}{\mathbb{Q}}
\newcommand{\ZZ}{\mathbb{Z}}
\newcommand{\TT}{\mathbb{T}}
\newcommand{\cD}{\mathcal{D}}
\newcommand{\cQ}{\mathcal{Q}}
\newcommand{\cR}{\mathcal{R}}
\newcommand{\bq}{{\boldsymbol{q}}}
\newcommand{\br}{{\boldsymbol{r}}}
\newcommand{\bs}{{\boldsymbol{s}}}
\newcommand{\bx}{{\boldsymbol{x}}}
\newcommand{\bn}{{\boldsymbol{n}}}
\newcommand{\by}{{\boldsymbol{y}}}
\newcommand{\0}{{\boldsymbol{0}}}
\newcommand{\bgamma}{\boldsymbol{\gamma}} 
\def\eps{\varepsilon}
\providecommand{\ceil}[1]{\lceil#1\rceil}
\providecommand{\abs}[1]{\lvert#1\rvert}
\providecommand{\Abs}[1]{\left|#1\right|}
\providecommand{\norm}[1]{\lVert#1\rVert}
\providecommand{\inner}[2]{\langle#1,#2\rangle}
\title{Rational approximation of affine coordinate subspaces of Euclidean space} \author{Felipe~A.~Ram{\'i}rez \and David~S.~Simmons \and Fabian~S{\"u}ess}
\address{Wesleyan University, Department of Mathematics and Computer Science, 265 Church Street, Middletown, CT 06459}
\email{framirez@wesleyan.edu}
\address{University of York, Department of Mathematics, Heslington, York YO10 5DD, UK}
\email{David.Simmons@york.ac.uk}
\urladdr{\url{https://sites.google.com/site/davidsimmonsmath/}}
\address{University of York, Department of Mathematics, Heslington, York YO10 5DD, UK}
\email{fs752@york.ac.uk}
\date{}
\begin{document}

\begin{abstract}
  We show that affine coordinate subspaces of dimension at least two
  in Euclidean space are of Khintchine type for divergence. For affine
  coordinate subspaces of dimension one, we prove a result which
  depends on the dual Diophantine type of the basepoint of the
  subspace. These results provide evidence for the conjecture that all
  affine subspaces of Euclidean space are of Khintchine type for
  divergence.
\end{abstract}
\maketitle

\tableofcontents

\section{Introduction and results}
\label{sec:results}

The field of \emph{simultaneous Diophantine approximation} is
concerned with how well points in $\RR^d$ can be approximated by
points in $\QQ^d$. One way in which the quality of approximation is
measured is via some function $\psi:\NN\to\RR^+\cup\{0\}$ which we
often think of as a rate of approximation. Specifically, a vector
$\bx\in\RR^d$ is said to be \emph{$\psi$-approximable} if there are
infinitely many $q\in\NN$ such that $\norm{q\bx} < \psi(q)$, where
$\norm{\cdot}$ denotes sup-norm distance to the nearest integer
point. Given $\psi$, it is natural to wonder about the size of the set
of all $\psi$-approximable vectors. \emph{Khintchine's
  theorem}~\cite{Khintchine} answers this question when `size' means
Lebesgue measure. It is the foundational result of \emph{metric
  Diophantine approximation}, and it states that if $\psi$ is
nonincreasing, then the set of $\psi$-approximable vectors in $\RR^d$
has either zero measure or full measure, depending on whether the
series $\sum_{q\in\NN}\psi(q)^d$ converges or diverges,
respectively. Gallagher~\cite{Gallagherkt} proved that if $d\geq2$
then one can remove the monotonicity assumption on $\psi$.

While Khintchine's theorem gives us a means to determine the measure
of $\psi$-approximable vectors, it leaves us completely in the dark
regarding approximation within sets of zero measure. Such problems
arise very naturally. For instance, in two dimensions, Khintchine's
theorem tells us that if $\sum_{q\in\NN}\psi(q)^2$ diverges, then for
almost every $x\in\RR$ the set
$\{y\in\RR : (x,y) \text{ is $\psi$-approximable}\}\subseteq\RR$ has
full Lebesgue measure. However, \emph{a priori}, any given value of
$x$, for example $x = \sqrt 2$, may be an exception to this almost
everywhere statement. Ideally, we would like to obtain a statement of
the following form: Let $\ell$ and $k$ be positive integers with
$\ell+k=d$, and let $\psi$ be a nonincreasing approximating
function. Then, for $\bx\in\RR^\ell$, the set of $\psi$-approximable
vectors in $\{\bx\}\times\RR^k\subseteq\RR^d$ has either zero or full
$k$-dimensional Lebesgue measure, depending on whether the sum
$\sum_{q\in\NN}\psi(q)^d$ converges or diverges, respectively.

However, upon choosing a rational vector $\bx$, it is easily
established that the convergence part of the above statement cannot be
true. Hence, it is worth treating the different sides of the problem
separately. The convergence side has previously been addressed in
greater generality by Ghosh~\cite{Ghosh} (and will not be visited in
this paper). There, he shows that whether an affine subspace enjoys
the convergence property depends on the Diophantine type of the matrix
used to define the subspace.

An affine coordinate subspace $\{\bx\}\times\RR^k \subseteq\RR^d$ is
said to be of \emph{Khintchine type for divergence} if for any
nonincreasing function $\psi:\NN\to\RR^+$ such that
$\sum_{q\in\NN}\psi(q)^d$ diverges, almost every point on
$\{\bx\}\times\RR^k$ is $\psi$-approximable. Intuitively,
$\{\bx\}\times\RR^k$ is of Khintchine type for divergence if its typical
points behave like the typical points of Lebesgue measure with respect
to the divergence case of Khintchine's theorem. The recent
article~\cite{hyperplanes} addresses the issue for certain affine
coordinate hyperplanes in $\RR^d$, where $d\geq 3$. There, sufficient
conditions are given for a hyperplane to be of Khintchine type for
divergence. In this note we settle the case of affine coordinate
subspaces of dimension at least two and make vast progress on the
one-dimensional case.

\begin{remark*}
  It is worth noting that the notions of Khintchine types for
  convergence and divergence can be analogously defined for general
  manifolds. It has been proved by
  Beresnevich--Dickinson--Velani~\cite{BDVplanarcurves} and
  Beresnevich~\cite{Bermanifolds} that analytic nondegenerate
  submanifolds of $\RR^d$ are of Khintchine type for divergence. Here,
  an analytic submanifold of $\RR^d$ is said to be
  \emph{nondegenerate} if it is not contained in any affine
  hyperplane.

  In fact, it is conjectured that nondegenerate submanifolds of
  $\RR^d$ are also of Khintchine type for convergence, and this is
  known in several cases \cites{BVVZ, Simmons-convergence-case}. This
  contrasts with the situation for degenerate manifolds (e.g. affine
  subspaces), as the aforementioned results of Ghosh indicate. So it
  is interesting that in the divergence case, it does not seem to
  matter whether a manifold is degenerate or not.
\end{remark*}

Coming back to the present problem, we prove the following:

\begin{theorem}\label{thm:subspaces}
  Every affine coordinate subspace of Euclidean space of dimension at
  least two is of Khintchine type for divergence.
\end{theorem}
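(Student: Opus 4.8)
The plan is to fix $\bx\in\RR^\ell$ and a nonincreasing $\psi\colon\NN\to\RR^+$ with $\sum_q\psi(q)^d=\infty$, where $d=\ell+k$ and $k\ge 2$, and to exploit the product structure $\{\bx\}\times\RR^k$ by separating the two coordinate blocks. A point $(\bx,\by)$ is $\psi$-approximable exactly when $\norm{q\bx}<\psi(q)$ and $\norm{q\by}<\psi(q)$ for infinitely many $q$; since $\bx$ is fixed, the first inequality merely restricts the admissible denominators to the set $Q:=\{q\in\NN:\norm{q\bx}<\psi(q)\}$, and the assertion of the theorem becomes: for almost every $\by\in\RR^k$ one has $\norm{q\by}<\widetilde\psi(q)$ for infinitely many $q$, where $\widetilde\psi:=\psi\cdot\mathbf 1_Q$. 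Since $k\ge 2$, Gallagher's theorem~\cite{Gallagherkt} -- which requires no monotonicity of the approximating function -- yields this for almost every $\by$ provided $\sum_q\widetilde\psi(q)^k=\sum_{q\in Q}\psi(q)^k=\infty$. So the whole problem reduces to a statement about the basepoint alone, which I would isolate as a lemma: \emph{if $\psi$ is nonincreasing with $\sum_q\psi(q)^{\ell+k}=\infty$, then $\sum_{q\in Q}\psi(q)^k=\infty$.}

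To prove the lemma I may assume $\psi(q)\to 0$ (otherwise $Q$ is infinite and $\psi$ is bounded away from $0$, so the series diverges trivially) and, after replacing $\psi$ by $\min(\psi,1/2)$, that $\psi\le 1/2$. I would then argue dyadically on the blocks $B_j=[2^j,2^{j+1})$. By Cauchy condensation and an index shift, $\sum_q\psi(q)^d=\infty$ is equivalent to $\sum_j 2^j\psi(2^{j+1})^d=\infty$; call $B_j$ \emph{rich} if $2^j\psi(2^{j+1})^\ell\ge C_0$ (with $C_0=C_0(\bx)$ the constant from the counting lemma below) and \emph{poor} otherwise. The poor blocks satisfy $\psi(2^{j+1})^\ell<C_0 2^{-j}$, hence contribute at most $C_0^{1+k/\ell}\sum_j 2^{-jk/\ell}<\infty$ to the series $\sum_j 2^j\psi(2^{j+1})^d$, so the rich blocks still carry a divergent part of it. On a rich block, every $q\in B_j$ with $\norm{q\bx}<\psi(2^{j+1})$ automatically lies in $Q$, because $q<2^{j+1}$ forces $\psi(q)\ge\psi(2^{j+1})$, and moreover $\psi(q)^k\ge\psi(2^{j+1})^k$ for such $q$. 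Therefore, if one can exhibit $\gg 2^j\psi(2^{j+1})^\ell$ such denominators in each rich block, then $\sum_{q\in Q}\psi(q)^k\gg\sum_{j\ \mathrm{rich}}2^j\psi(2^{j+1})^\ell\cdot\psi(2^{j+1})^k=\sum_{j\ \mathrm{rich}}2^j\psi(2^{j+1})^d=\infty$, which finishes the argument.

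Everything therefore rests on the following counting lemma, and I expect it to be the main obstacle: for every $\bx\in\RR^\ell$ there are constants $C_0=C_0(\bx)\ge 1$ and $c_0=c_0(\bx)>0$ such that $\#\{q\in[N,2N):\norm{q\bx}<\delta\}\ge c_0\,N\delta^\ell$ whenever $0<\delta\le 1/2$ and $N\delta^\ell\ge C_0$. Counting such $q$ is the same as counting points of the unimodular lattice $\Lambda_\bx=\{(q,\,q\bx-\bp):q\in\ZZ,\ \bp\in\ZZ^\ell\}\subseteq\RR^{1+\ell}$ in the box $[N,2N)\times[-\delta,\delta]^\ell$, whose volume is comparable to $N\delta^\ell$. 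When $\bx$ has no abnormally good rational approximation at the relevant scale, $\Lambda_\bx$ is suitably well-rounded there and the count is comparable to the volume by the standard lattice-point estimate; when it does, say $\bp_0/q_0$ with $q_0$ small, the integer multiples of $(q_0,\,q_0\bx-\bp_0)$ lie in a thin tube about the $q$-axis and already contribute $\gg N/q_0\gg N\delta^\ell$ points of the box. Making this dichotomy quantitative and uniform in $\bx$ -- in particular handling basepoints with rational or very well approximable coordinates, for which $q\mapsto q\bx$ fails to equidistribute on $\TT^\ell$ -- is where the real work lies; I would carry it out using Minkowski's theorems together with an analysis of the successive minima of $\Lambda_\bx$.
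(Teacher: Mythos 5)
Your overall strategy is exactly the paper's: reduce to Gallagher's theorem applied to $\psi\cdot\mathbf 1_Q$, so that the whole theorem hinges on the divergence of $\sum_{q\in Q}\psi(q)^k$ (this is the paper's Lemma~\ref{lem:series}). The gap is in the counting lemma on which you rest that divergence. The block-localized bound
\[
\#\{q\in[N,2N):\norm{q\bx}<\delta\}\ \geq\ c_0\,N\delta^\ell \qquad\text{whenever } \delta\leq\tfrac12 \text{ and } N\delta^\ell\geq C_0(\bx)
\]
is false, and it fails exactly for the basepoints you flag as delicate. Take $\ell=1$ and $x$ with a convergent $p_n/q_n$ for which $\theta:=|q_nx-p_n|\asymp 1/q_{n+1}$ is tiny compared to $1/q_n$. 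Choose an integer $N$ with $N\theta\in[1/4,3/8]$ (such $N$ exists once $\theta\leq 1/8$). For $q\in[N,2N)$ write $qx\equiv s/q_n+q\theta/q_n\pmod 1$ with $s\equiv qp_n\pmod{q_n}$; since $q\theta/q_n\in[\tfrac{1}{4q_n},\tfrac{3}{4q_n})$, one checks $\norm{qx}\geq\tfrac{1}{4q_n}$ for \emph{every} $q$ in the block. So for $\delta$ just below $1/(4q_n)$ the count is $0$, while $N\delta\asymp q_{n+1}/q_n$ is unbounded along a Liouville sequence; no constants $C_0(x),c_0(x)$ rescue the statement. Your fallback for well-approximable $\bx$ also fails here: the multiples $a(q_n,q_nx-p_n)$ with $aq_n\in[N,2N)$ do lie in a thin tube about the $q$-axis, but their second coordinate $a\theta$ is pinned in $[\tfrac{1}{4q_n},\tfrac{3}{4q_n})$, outside $[-\delta,\delta]$. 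The phase of the orbit segment at the start of the block matters, and it cannot be controlled.

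What is true, and what the paper proves (Lemma~\ref{lem:count}), is the \emph{cumulative} bound $\#\{q\leq N:\norm{q\bx}<\delta\}\geq N\delta^\ell-1$, valid for all $\bx$, $N$, $\delta$: averaging $\#\{q\leq N:\norm{q\bx+\bgamma}<\delta/2\}$ over $\bgamma\in\TT^\ell$ produces a translate with at least $N\delta^\ell$ solutions, and taking differences $q_2-q_1$ removes the translate at the cost of the $-1$. A cumulative lower bound cannot be localized to a single dyadic block (which is what your rich/poor dichotomy requires), and the paper bridges this by summation by parts: after rounding $\psi$ to powers of $2$, writing $\psi(2^m)^k=\sum_{n\geq m}\bigl(\psi(2^n)^k-\psi(2^{n+1})^k\bigr)$ converts $\sum_m\psi(2^m)^k\,Q(2^{m-1},2^m)$ into $\sum_n\bigl(\psi(2^n)^k-\psi(2^{n+1})^k\bigr)\,Q(2^n)$, which only ever invokes the cumulative count $Q(2^n)\geq 2^n\psi(2^n)^\ell-1$. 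If you want to keep a block-by-block argument you would need to show that the rich blocks with deficient counts contribute only a convergent part of $\sum_j 2^j\psi(2^{j+1})^d$, which I do not see how to do directly; the Abel summation is the efficient substitute, and with it your reduction to Gallagher goes through verbatim.
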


\begin{remark*}
  Combining Theorem \ref{thm:subspaces} with Fubini's theorem shows
  that every submanifold of Euclidean space which is foliated by
  affine coordinate subspaces of dimension at least two is of
  Khintchine type for divergence. For example, given $a,b,c\in\RR$
  with $(a,b)\neq (0,0)$, the three-dimensional affine subspace
  \[
    \{(x,y,z,w) : a x + b y = c\} \subseteq \RR^4
  \]
  is of Khintchine type for divergence, being foliated by the
  two-dimensional affine coordinate subspaces $(x,y)\times\RR^2$
  ($x,y\in\RR$, $a x + b y = c$).
\end{remark*}

The reason for the restriction to subspaces of dimension at least two
is that Gallagher's theorem is used in the proof, and it is only true
in dimensions at least two. Regarding one-dimensional affine
coordinate subspaces, we have the following weaker theorem:

\begin{theorem}\label{thm:lines}
  Consider a one-dimensional affine coordinate subspace
  $\{\bx\}\times\RR \subseteq \RR^d$, where $\bx\in\RR^{d - 1}$.
  \begin{itemize}
  \item[(i)] If the dual Diophantine type of $\bx$ is strictly greater
    than $d$, then $\{\bx\}\times \RR$ is contained in the set of very
    well approximable vectors
    \[
      \mathrm{VWA}_d = \{\by : \exists \eps > 0 \; \exists^\infty
      q\in\NN \;\; \norm{q\by} < q^{-1/d - \eps}\}.
    \]
    % In particular, if $\psi:\NN\to\RR^+$ is a function such that for
    % all $\eps > 0$, we have $\psi(q) > q^{-1/d - \eps}$ for all $q$
    % sufficiently large, then every point of
    % $\{\bx\}\times\RR\subseteq\RR^d$ is $\psi$-approximable.
  \item[(ii)] If the dual Diophantine type of $\bx$ is strictly less
    than $d$, then $\{\bx\}\times\RR$ is of Khintchine type for
    divergence.
  \end{itemize}
\end{theorem}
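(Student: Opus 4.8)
The two parts call for quite different methods. The plan for (i) is to deduce it from Khintchine's transference principle. The key observation is that for \emph{every} $y\in\RR$ the dual Diophantine type of $(\bx,y)\in\RR^d$ is at least that of $\bx$: if $\bq\in\ZZ^{d-1}\setminus\{\0\}$ is such that $\norm{\bq\cdot\bx}$ is very small relative to the height $\abs{\bq}$, then $(\bq,0)\in\ZZ^d\setminus\{\0\}$ has the same height and $\norm{(\bq,0)\cdot(\bx,y)}=\norm{\bq\cdot\bx}$, so every good dual approximation of $\bx$ is already a good dual approximation of $(\bx,y)$. Hence if the dual type of $\bx$ is $>d$ then $(\bx,y)$ has dual type strictly greater than $d$, i.e.\ it is very well approximable in the dual sense; by Khintchine's transference principle --- in the form: a vector of $\RR^d$ is very well approximable if and only if it is very well approximable in the dual sense --- we get $(\bx,y)\in\mathrm{VWA}_d$, and $y$ was arbitrary. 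It is exactly the strict inequality in the hypothesis that is needed here: at dual type $=d$ one only obtains dual type $\geq d$ for $(\bx,y)$.

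\textbf{Part (ii).} Fix a nonincreasing $\psi$ with $\sum_q\psi(q)^d=\infty$, and (working modulo $1$, which is harmless by periodicity) put $\cQ=\{q\in\NN:\norm{q\bx}<\psi(q)\}$ and $A_q=\{y\in[0,1):\norm{qy}<\psi(q)\}$ for $q\in\cQ$, with $A_q=\emptyset$ otherwise; the goal is to show that $\limsup_q A_q$ has full measure. I would argue as follows: (1) by a standard zero--one law for limsup sets of this shape it suffices to prove $\abs{\limsup_q A_q}>0$; (2) positive measure then follows from the Chung--Erd\H{o}s inequality once one verifies the divergence $\sum_{q\in\cQ}\psi(q)=\infty$ together with a quasi-independence bound $\sum_{q,q'\leq N}\abs{A_q\cap A_{q'}}\ll\big(\sum_{q\leq N}\abs{A_q}\big)^2$ along a sequence of $N$, where $\abs{A_q}=\min(2\psi(q),1)$ for $q\in\cQ$. (Alternatively, steps (1)--(2) can be replaced by a single appeal to the ubiquitous systems framework of Beresnevich--Velani, which yields full measure directly from a local ubiquity estimate that is again supplied by the counting bound below.)

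The crux --- and the only place where the hypothesis on the dual type of $\bx$ is used --- is the divergence $\sum_{q\in\cQ}\psi(q)=\infty$, i.e.\ a lower bound on how often $\norm{q\bx}<\psi(q)$. After the usual reductions (one may assume $\psi(q)\leq q^{-1/d}$, and then discard those $q$ with $\psi(q)<q^{-\theta}$, which contribute a convergent amount to $\sum_q\psi(q)^d$ as soon as $\theta>1/d$) this reduces to the counting estimate
\[
\#\{q\in(M,2M]:\norm{q\bx}<\delta\}\ \gg\ \delta^{\,d-1}M\qquad(\delta\geq M^{-\theta})
\]
for a fixed $\theta>1/d$ chosen small enough in terms of how far the dual type of $\bx$ lies below $d$; summing this against $\psi$ (organised by the size of $\psi(q)$) reproduces a constant multiple of $\sum_q\psi(q)^d$, since the weight $\psi(q)$ from $\abs{A_q}$ multiplies the local density $\asymp\psi(q)^{d-1}$ of $\cQ$. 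To prove the counting estimate I would write the left side as the number of points of the unimodular lattice $\Lambda=\{(q,\,q\bx-\bp):q\in\ZZ,\ \bp\in\ZZ^{d-1}\}\subseteq\RR^d$ inside the box $(M,2M]\times(-\delta,\delta)^{d-1}$, smooth the box slightly, and apply Poisson summation. Its dual lattice is $\Lambda^*=\{(m-\boldsymbol h\cdot\bx,\,\boldsymbol h):m\in\ZZ,\ \boldsymbol h\in\ZZ^{d-1}\}$, whose nonzero vectors have first coordinate of absolute value $\norm{\boldsymbol h\cdot\bx}$; since the dual type of $\bx$ is $<d$ we have $\norm{\boldsymbol h\cdot\bx}\geq\abs{\boldsymbol h}^{-w}$ for some $w<d$ and all large $\boldsymbol h$, and feeding this into the dual sum produces an error term that saves a power of $M$ over the main term $\asymp\delta^{d-1}M$ provided $\theta<1/w$ --- a condition compatible with $\theta>1/d$ precisely because $w<d$.

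Finally, the quasi-independence estimate for $\abs{A_q\cap A_{q'}}$ is of classical type, of the form $\abs{A_q\cap A_{q'}}\ll\psi(q)\psi(q')+(\text{a term governed by }\gcd(q,q'))$, and summing it over $q,q'\in\cQ$ is handled using the same counting estimate to bound how often each value of $\gcd(q,q')$ occurs among elements of $\cQ$. I expect the Poisson-summation bookkeeping at the critical aspect ratio $\delta\asymp M^{-1/d}$ to be the genuine obstacle; the zero--one law, the Chung--Erd\H{o}s step, and the pair-correlation sum all follow the now-standard template for divergence-type Khintchine theorems.
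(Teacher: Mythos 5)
Your part (i) is exactly the paper's argument: pad a good dual approximation $\bn$ of $\bx$ to $(\bn,0)$ to see that $\tau_D(\bx,y)\geq\tau_D(\bx)>d$, then invoke Khintchine's transference principle. That part is complete and correct.

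For part (ii) your overall architecture (limsup sets, a zero--one law, quasi-independence or ubiquity, with a Diophantine counting estimate as the crux) is in the right family, but you have inverted where the hypothesis $\tau_D(\bx)<d$ must enter, and the step you yourself flag as ``the genuine obstacle'' is a real gap. The paper does \emph{not} prove a lower bound $\Abs{\{q\in(M,2M]:\norm{q\bx}<\delta\}}\gg\delta^{d-1}M$. Instead it gets the ``enough rationals'' input unconditionally from Minkowski's theorem, which yields a covering of $[0,1]$ by balls $B(p/q,2/(qN\psi(N)^{d-1}))$ over \emph{all} $q\leq N$ with $\norm{q\bx}<\psi(N)$; the hypothesis on $\bx$ is then used only to prove an \emph{upper} bound $\Abs{\{q\leq N:\norm{q\bx}<\delta\}}\leq 4^{d}N\delta^{d-1}$ (valid for $\delta\geq N^{-1/\tau}$, $\tau_D(\bx)<\tau<d$), which shows that the initial segment $q\leq k^{j-1}$ accounts for at most a proportion $1-\kappa$ of the covering, leaving a positive proportion for the top dyadic block --- exactly the local ubiquity statement needed for \cite[Corollary~2]{limsup}. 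That upper bound is proved by a clean geometry-of-numbers duality argument (volume of a union of fundamental domains, plus the transference theorem relating successive minima of a lattice and its dual), not by Poisson summation.

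The concrete gaps in your route are these. First, in the Poisson-summation error the single-vector bound $\norm{\bn\cdot\bx}\geq\abs{\bn}_\infty^{-w}$ controls only the worst dual frequency; to bound the full dual sum $\sum_{0<\abs{\bn}_\infty\ll\delta^{-1}}\min(M,\norm{\bn\cdot\bx}^{-1})$ you need to count how many $\bn$ have $\norm{\bn\cdot\bx}$ in each dyadic range, which is itself a counting problem of the same type as the one you are trying to solve --- so the argument as sketched is circular at the critical aspect ratio $\delta\asymp M^{-1/d}$. (A non-circular fix is available: prove the \emph{global} lower bound $\Abs{\{q\leq N:\norm{q\bx}<\delta\}}\geq N\delta^{d-1}-1$ by an elementary averaging/difference trick, and subtract an upper bound for the initial segment; but that upper bound is precisely the paper's Lemma on the lattice side, so you cannot avoid proving it.) Second, even granting the dyadic count lower bound, it does not by itself give positive measure for $\limsup A_q$: the Chung--Erd\H{o}s route still requires the pair-correlation estimate $\sum_{q,q'\in\cQ,\,q,q'\leq N}\abs{A_q\cap A_{q'}}\ll(\sum_{q\in\cQ,\,q\leq N}\abs{A_q})^2$, whose $\gcd$-term forces you to control the distribution of $\cQ$ in arithmetic progressions --- more unproved counting; and the ubiquity alternative you mention requires the covering-minus-upper-bound argument above, not the lower bound. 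Finally, your reduction to $\psi(q)\geq q^{-\theta}$ by ``discarding'' small $q$ needs the same care as the paper's $\overline\psi=\max\{\psi,\phi\}$ device, since after discarding, $\psi$ is no longer a monotone function on all of $\NN$; this is minor but should be said.
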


\noindent Here the \emph{dual Diophantine type} of a point
$\bx\in\RR^\ell$ is the number
\begin{equation}
  \label{taudef}
  \sup\left\{\tau\in\mathbb{R}^+: \norm{\inner{\bn}{\bx}} < |\bn|_\infty^{-\tau} \textrm{ for i.m. } \bn\in\ZZ^\ell\backslash\{\0\} \right\}.
\end{equation}
\begin{remark*}
  The inclusion $\{\bx\}\times\RR \subseteq \mathrm{VWA}_d$ in part
  (i) is philosophically ``almost as good'' as being of Khintchine
  type for divergence, since it implies that for sufficiently ``nice''
  functions $\psi:\NN\to\RR^+$ such that $\sum_{q\in\NN} \psi(q)^d$
  diverges, almost every point on $\{\bx\}\times\RR$ is
  $\psi$-approximable. For example, call a function $\psi$ \emph{good}
  if for each $c > 0$, we have either $\psi(q)\geq q^{-c}$ for all $q$
  sufficiently large or $\psi(q) \leq q^{-c}$ for all $q$ sufficiently
  large. Then by the comparison test, if $\psi$ is a good function
  such that $\sum_{q\in\NN} \psi(q)^d$ diverges, then for all
  $\eps > 0$, we have $\psi(q) \geq q^{-1/d - \eps}$ for all $q$
  sufficiently large, and thus by Theorem \ref{thm:lines}(i), every
  point of $\{\bx\}\times\RR$ is $\psi$-approximable. The class of
  good functions includes the class of \emph{Hardy $L$-functions}
  (those that can be written using the symbols $+,-,\times,\div,\exp$,
  and $\log$ together with the constants and the identity function)
  \cite[Chapter III]{Hardy}; {\it cf.} \cite{AvdD} for further
  discussion and examples.
\end{remark*}

Taken together, parts (i) and (ii) of Theorem \ref{thm:lines} imply
that if $\psi$ is a Hardy $L$-function such that
$\sum_{q\in\NN} \psi(q)^d$ diverges, and if $\bx\in\RR$ is a vector
whose dual Diophantine type is not exactly equal to $d$, then almost
every point of $\{\bx\}\times\RR \subseteq \RR^d$ is
$\psi$-approximable. This situation is somewhat frustrating, since it
seems strange that points in $\RR^{d - 1}$ with dual Diophantine type
exactly equal to $d$ should have any special properties (as opposed to
those with dual Diophantine type $(d - 1)$, which are the ``not very
well approximable'' points). However, it seems to be impossible to
handle these points using our techniques.

{\bf Acknowledgements.} The authors were supported in part by the
EPSRC Programme Grant EP/J018260/1. We thank Victor Beresnevich, Jon Chaika, and Sanju Velani for helpful
discussions and advice.

\section{Proof of Theorem~\ref{thm:subspaces}: Subspaces of dimension
  at least two}\label{sec:gallagher}

Consider an affine coordinate subspace $\{\bx\}\times\RR^k$, where
$\bx\in\RR^\ell$ and $\ell + k = d$. Given a nonincreasing function
$\psi:\NN\to\RR^+$, for each $M,N$ with $M < N$ let
\begin{equation*}
  Q(M,N): = Q_{\bx,\psi}(M,N) = \Abs{\left\{M < q\leq N : \norm{q\bx} < \psi(N) \right\}},
\end{equation*}
and write $Q(N) : = Q(0, N)$. Since any real number $\delta > 0$ may
be thought of as a constant function, the expression $Q_\delta(M,N)$
makes sense. Note that $Q_\psi(M,N) = Q_{\psi(N)}(M,N)$.

\begin{lemma}\label{lem:count}
  For all $N\in\NN$,
  \begin{equation*}
    Q_\delta(N) = \Abs{\left\{q\in\NN : \norm{q\bx} < \delta, q\leq N\right\}}\geq N\delta^\ell - 1.
  \end{equation*}
\end{lemma}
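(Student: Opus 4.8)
The plan is to recognise $Q_\delta(N)$ as the number of lattice points of $\ZZ^{\ell+1}$ inside a symmetric convex body, and then to read off the bound from a sharp form of Minkowski's lattice-point theorem. First I would dispose of the trivial cases: if $\delta\ge 1/2$ then $\norm{q\bx}\le 1/2\le\delta$ for every $q$, so $Q_\delta(N)=N\ge N\delta^\ell-1$; and if $N\delta^\ell\le 1$ the right-hand side is $\le 0$. So assume $\delta<1/2$ and $N\delta^\ell>1$.

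Consider the set
\[
  C=\bigl\{(t,\by)\in\RR\times\RR^\ell:\abs{t}\le N,\ \abs{t\bx-\by}_\infty<\delta\bigr\}\subseteq\RR^{\ell+1},
\]
where $\abs{\cdot}_\infty$ denotes the ordinary sup-norm on $\RR^\ell$. Being an intersection of half-spaces, $C$ is convex, and it is plainly bounded and symmetric about $\0$. Integrating over the first coordinate (for each fixed $t$ the admissible $\by$ fill a box of $\ell$-dimensional volume $(2\delta)^\ell$) gives $\operatorname{vol}(C)=2N(2\delta)^\ell=2^{\ell+1}N\delta^\ell$. Next I would identify $C\cap\ZZ^{\ell+1}$. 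If $(q,\bp)\in C\cap\ZZ^{\ell+1}$ then $\abs{qx_i-p_i}<\delta<1/2$ for each $i$, so each $p_i$ is the unique nearest integer to $qx_i$ and hence $\norm{q\bx}<\delta$; conversely, each integer $q$ with $\abs{q}\le N$ and $\norm{q\bx}<\delta$ arises from exactly one such lattice point. Counting $q=0$ once and pairing $q\leftrightarrow-q$ for $q\ne 0$, this gives the identity $\Abs{C\cap\ZZ^{\ell+1}}=1+2\,Q_\delta(N)$.

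The final step is to invoke van der Corput's strengthening of Minkowski's first theorem: a bounded symmetric convex body in $\RR^{\ell+1}$ of volume exceeding $2^{\ell+1}m$, with $m\in\NN$, contains at least $2m+1$ points of $\ZZ^{\ell+1}$. Applying this to (the closure of) $C$ with $\delta$ replaced by any $\delta'<\delta$ shows $\Abs{C\cap\ZZ^{\ell+1}}\ge 2m+1$ for every positive integer $m<N(\delta')^\ell$, and letting $\delta'\uparrow\delta$ gives the same for every positive integer $m<N\delta^\ell$. Combined with the identity above, $Q_\delta(N)\ge m$ for all such $m$, so $Q_\delta(N)\ge\ceil{N\delta^\ell}-1\ge N\delta^\ell-1$, which is what is claimed. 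The $\delta'\uparrow\delta$ manoeuvre is needed only because $C$ is half-open (its defining inequalities are strict) while van der Corput's theorem is usually stated for closed bodies.

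The one genuinely essential ingredient is the sharp ($2m+1$) conclusion of van der Corput's theorem; the routine volume computation and the bookkeeping with the lattice points are straightforward. I expect the main temptation to avoid is to reach instead for Blichfeldt's principle or a direct box-pigeonhole argument: those produce only about $m+1$ lattice points in $C$ (or pairs summing to $\operatorname{vol}(C)/2$ in a second-moment count), and so yield only a bound of order $\tfrac12 N\delta^\ell$, which for large $N$ is strictly weaker than $N\delta^\ell-1$. So the care lies in quoting the lattice-point theorem in its full strength rather than its elementary form.
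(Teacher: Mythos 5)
Your proof is correct, but it packages the argument differently from the paper. You recast $Q_\delta(N)$ as a lattice-point count in the symmetric convex body $C\subseteq\RR^{\ell+1}$ and quote van der Corput's theorem in its sharp form (volume $>2^{\ell+1}m$ forces at least $2m+1$ lattice points); the volume computation, the identity $\Abs{C\cap\ZZ^{\ell+1}}=1+2Q_\delta(N)$ for $\delta<1/2$, and the passage from $m<N\delta^\ell$ to $Q_\delta(N)\geq N\delta^\ell-1$ are all right. The paper instead proves the bound from scratch: it averages the shifted count $Q_{\delta/2,\bgamma}(N)$ over $\bgamma\in\TT^\ell$ to find a translate with $Q_{\delta/2,\bgamma}(N)\geq N\delta^\ell$, and then uses the difference trick ($q_2-q_1$ of two good shifted solutions is a good unshifted solution at radius $\delta$) to conclude $Q_\delta(N)\geq Q_{\delta/2,\bgamma}(N)-1$. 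These are really the same mechanism: the averaging step is Blichfeldt's principle and the difference step is the symmetrization in the proof of van der Corput (note the paper's halving of $\delta$ plays exactly the role of your passing to $\tfrac12 C$), so your observation that the naive Blichfeldt/pigeonhole bound loses a factor of $2$ is exactly the point the paper's difference trick is designed to address. Your route buys brevity by outsourcing to a named theorem; the paper's is self-contained and avoids any discussion of open versus closed bodies. Two small remarks: the lemma implicitly assumes $\delta\leq1$ (otherwise $N\delta^\ell-1$ can exceed $N$), an assumption your argument shares with the paper's; and your claim $Q_\delta(N)=N$ for $\delta\geq1/2$ fails at $\delta=1/2$ because the defining inequality is strict (take $\bx=(1/2)$), though that case is covered anyway by your $\delta'\uparrow\delta$ limiting argument, so nothing is lost.
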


\begin{proof}
  Let
  $\cQ_\delta(N) = \left\{q\in\NN : \norm{q\bx} < \delta, q\leq
    N\right\}$, so that $Q_\delta = \Abs{\cQ_\delta}$.

  We first claim that
  $Q_\delta(N)\geq Q_{\frac{\delta}{2},\bgamma}(N) - 1$ for any
  $\bgamma\in\RR^\ell$ and $N\in\NN$, where
  \begin{equation*}
    \cQ_{\delta,\bgamma} : = \left\{q\in\NN : \norm{q\bx + \bgamma} < \delta, q\leq N\right\}
  \end{equation*}
  and $Q_{\delta,\bgamma}(N) = \Abs{\cQ_{\delta,\bgamma}(N)}$. Simply
  notice that if $q_1 < q_2\in\cQ_{\frac{\delta}{2},\bgamma}(N)$, then
  by the triangle inequality, $q_2 -
  q_1\in\cQ_{\delta}(N)$.
  Therefore, letting $q_0 = \min\cQ_{\frac{\delta}{2},\bgamma}(N)$, we
  have
  $\cQ_{\frac{\delta}{2},\bgamma}(N) -
  q_0\subseteq\cQ_\delta(N)\cup\{0\}$,
  which implies that $Q_\delta(N)\geq Q_{\delta/2,\bgamma}(N) - 1$.

  Now we show that for any $N\in\NN$ there is some $\bgamma$ such that
  $Q_{\frac{\delta}{2},\bgamma}(N)\geq N\delta^\ell$. Notice that
  \begin{align*}
    \int_{\TT^\ell} Q_{\frac{\delta}{2},\bgamma}(N)\,d\bgamma & = \int_{\TT^\ell} \sum_{q = 1}^N \mathbf{1}_{\left( - \frac{\delta}{2}, \frac{\delta}{2}\right)^\ell}(q\bx + \bgamma)\,d\bgamma = N\delta^\ell.
  \end{align*}
  Therefore, $Q_{\frac{\delta}{2},\bgamma}(N)$ must take some value
  $\geq N \delta^\ell$ at some $\bgamma$. Combining this with the
  previous paragraph proves the lemma.
\end{proof}

\begin{lemma}
  \label{lem:series}
  Suppose that $\sum_{q\in\NN}\psi(q)^d$ diverges. Then
  % Let $d\geq 2$ and suppose $\psi:\NN\to\RR^+$ is a nonincreasing
  % function such that $\sum_{q\in\NN}\psi(q)^d$ diverges. Then for
  % any natural number $k < d$,
  \begin{equation}
    \label{psikseries}
    \sum_{\norm{q\bx} < \psi(q)}\psi(q)^{k} = \infty.
  \end{equation}
  % for every $\bx\in\RR^\ell$, where $\ell = d - k$.
\end{lemma}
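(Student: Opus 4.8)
The plan is to compare the sum $\sum_{\norm{q\bx} < \psi(q)}\psi(q)^k$ to the divergent sum $\sum_{q\in\NN}\psi(q)^d$ by grouping the terms into dyadic-type blocks on which $\psi$ is roughly constant, and then applying Lemma \ref{lem:count} on each block to count how many $q$ actually satisfy $\norm{q\bx} < \psi(q)$. First I would reduce to the case where $\psi$ is bounded above, say $\psi \le 1$; otherwise there are infinitely many $q$ with $\psi(q) = \Norm{q\bx+\text{anything}}$-trivially large, but more carefully, since $\psi$ is nonincreasing, if $\psi(q)\ge1$ for infinitely many $q$ then it does so for all $q$, and then every $q$ satisfies $\norm{q\bx} < \psi(q)$ and $\psi(q)^k \ge \psi(q)^d$ for each $q$, so \eqref{psikseries} follows immediately from the divergence hypothesis. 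So assume $\psi(q) \to $ some limit $\le 1$; in fact we may as well assume $\psi(q)\le 1$ for all $q$.

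Next I would partition $\NN$ into consecutive blocks $B_j = (N_{j}, N_{j+1}]$ chosen so that $\psi(N_{j+1}) \ge \tfrac12 \psi(N_j)$ is violated as late as possible — concretely, let $N_0 = 0$ and having chosen $N_j$, let $N_{j+1}$ be the largest $N$ such that $\psi(N) > \tfrac12\psi(N_j)$ (so that on $B_j$ the function $\psi$ lies between $\tfrac12\psi(N_j)$ and $\psi(N_j)$, using monotonicity). On the block $B_j$, each $q$ with $\norm{q\bx} < \psi(q)$ in particular satisfies $\norm{q\bx} < \psi(N_j)$, so by Lemma \ref{lem:count} applied with $\delta = \psi(N_j)$ (comparing the counts at $N_{j+1}$ and $N_j$, i.e. $Q_\delta(N_{j+1}) - Q_\delta(N_j)$), the number of such $q$ in $B_j$ is at least
\[
  N_{j+1}\psi(N_j)^\ell - 1 - Q_{\psi(N_j)}(N_j) \ge (N_{j+1}-N_j)\psi(N_j)^\ell - \big(Q_{\psi(N_j)}(N_j) - N_j\psi(N_j)^\ell\big) - 1.
\]
Here the subtracted bracket is a genuine nuisance, so it is cleaner to instead choose the $N_j$ to be a \emph{sparse} subsequence along which the error term in Lemma \ref{lem:count} is negligible compared to the main term — or simply to observe that on a block where $\psi$ is constant up to a factor of $2$ and sufficiently long, $Q_{\psi(N_j)}(N_{j+1})$ already exceeds, say, $\tfrac12 N_{j+1}\psi(N_j)^\ell$. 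Summing the lower bound $\gtrsim (N_{j+1}-N_j)\psi(N_j)^\ell$ for the number of good $q\in B_j$, and weighting each such $q$ by $\psi(q)^k \ge (\tfrac12\psi(N_j))^k$, gives
\[
  \sum_{q\in B_j,\ \norm{q\bx}<\psi(q)} \psi(q)^k \ \gtrsim\ (N_{j+1}-N_j)\,\psi(N_j)^{\ell+k} = (N_{j+1}-N_j)\,\psi(N_j)^d \ \gtrsim\ \sum_{q\in B_j}\psi(q)^d,
\]
again using that $\psi \asymp \psi(N_j)$ on $B_j$. Summing over $j$ and invoking the divergence of $\sum_q \psi(q)^d$ finishes the argument.

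The main obstacle is the additive $-1$, and more seriously the unquantified deviation $Q_{\psi(N_j)}(N_j) - N_j\psi(N_j)^\ell$, in Lemma \ref{lem:count}: the lemma only gives a one-sided bound with an error that could in principle be as large as the main term when the block is short. I expect this to be dealt with either by passing to a subsequence $(N_j)$ along which $N_{j+1}/N_j \to \infty$ (so the block contributes its full length and the $O(1)$ errors, summing to a convergent or harmless contribution, are absorbed), or by a direct two-sided counting estimate in the regime where $\psi$ is essentially constant. Everything else — the reduction to $\psi \le 1$, the dyadic grouping, and the final comparison of sums — is routine.
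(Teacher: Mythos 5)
Your overall skeleton --- blocks on which $\psi$ is essentially constant, Lemma \ref{lem:count} for the counting, and a Cauchy--condensation comparison at the end --- matches the paper's. But the step you flag as a ``nuisance'' is a genuine gap, and neither of your proposed fixes closes it. To lower-bound the number of good $q$ in a single block $(N_j,N_{j+1}]$ you need $Q_\delta(N_{j+1})-Q_\delta(N_j)$, and Lemma \ref{lem:count} gives only a \emph{lower} bound on each term; no upper bound on $Q_\delta(N_j)$ is available for general $\bx$ (for $\bx$ rational or Liouville-like, $Q_\delta(N)$ can be of order $N$ rather than $N\delta^\ell$, and the lemma must cover such $\bx$ since Theorem \ref{thm:subspaces} makes no Diophantine assumption on the base point). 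So the ``direct two-sided counting estimate'' you invoke does not exist. The sparse-subsequence fix also fails: to beat the trivial bound $Q_\delta(N_j)\le N_j$ you would need $N_j\ll N_{j+1}\psi(N_{j+1})^\ell$, forcing $(N_j)$ to grow super-geometrically, and then $\sum_j N_{j+1}\psi(N_{j+1})^d$ can converge even though $\sum_q\psi(q)^d$ diverges (take $\psi(q)^d=1/(q\log q)$ and $N_j$ doubly exponential).

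The paper's resolution is an Abel summation. Writing the block sum as $\sum_m\psi(2^m)^kQ(2^{m-1},2^m)$ and expanding $\psi(2^m)^k=\sum_{n\ge m}(\psi(2^n)^k-\psi(2^{n+1})^k)$, one exchanges the order of summation so that only the \emph{cumulative} counts $\sum_{m\le n}Q(2^{m-1},2^m)\ge Q(2^n)$ appear, multiplied by the nonnegative coefficients $\psi(2^n)^k-\psi(2^{n+1})^k$. Lemma \ref{lem:count} applies directly to $Q(2^n)$, and the $-1$ errors now telescope against these coefficients into a single bounded term instead of accumulating block by block. (A preliminary normalization replacing $\psi$ by the nearest power of $2$ from below guarantees that $\psi(2^n)^k-\psi(2^{n+1})^k\gg\psi(2^n)^k$ whenever this difference is nonzero, which is what makes the final comparison with $\sum_m 2^m\psi(2^m)^d$ go through.) This summation-by-parts step is the idea your write-up is missing.
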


\begin{proof}
  We may assume without loss of generality that $\psi(q) = 2^{-m_q}$
  where $m_q\in\NN$. (Indeed, given any $\psi$ as in the theorem
  statement, we can let $m_q = \ceil{-\log_2\psi(q)}$ and replace
  $\psi(q)$ with $2^{-m_q}$. We will have changed $\psi$ by no more
  than a factor of $\frac{1}{2}$, preserving the divergence of the
  series $\sum_{q\in\NN}\psi(q)^d$, but since the new $\psi$ is less
  than the old $\psi$, divergence of \eqref{psikseries} for the new
  $\psi$ implies divergence of \eqref{psikseries} for the old $\psi$.)
  Now,
  \begin{align*}
    \sum_{\norm{q\bx} < \psi(q)}\psi(q)^{k} &\geq \sum_{m\in\NN}\psi(2^m)^{k}\Abs{\left\{2^{m - 1} < q\leq 2^m : \norm{q\bx} < \psi(2^m)\right\}}\\
                                            & = \sum_{m\in\NN}\psi(2^m)^{k}Q(2^{m - 1}, 2^m) \\
                                            & = \sum_{m\in\NN}\sum_{n\geq m}\left(\psi(2^n)^{k} - \psi(2^{n + 1})^{k}\right)Q(2^{m - 1}, 2^m)\\
                                            & = \sum_{n\in\NN}\left(\psi(2^n)^{k} - \psi(2^{n + 1})^{k}\right)\sum_{m = 1}^n Q(2^{m - 1}, 2^m) \\
                                            &\geq \sum_{n\in\NN}\left(\psi(2^n)^{k} - \psi(2^{n + 1})^{k}\right) Q(2^n) \\
                                            &\geq \sum_{n\in\NN}\left(\psi(2^n)^{k} - \psi(2^{n + 1})^{k}\right)
                                              [2^n\psi(2^n)^\ell - 1] \;\;\;\; (\text{Lemma \ref{lem:count}})\\
                                            &=  -\psi(1)^k + \sum_{n\in\NN}\left(\psi(2^n)^{k} - \psi(2^{n + 1})^{k}\right)
                                              2^n\psi(2^n)^\ell.
  \end{align*}
  Now, let $(n_j)_{j = 1}^\infty$ be the sequence indexing the set
  $\{n\in\NN : m_{2^{n}}\neq m_{2^{n + 1}}\}$ in increasing
  order. Then we have
  $\psi(2^{n_j})^k - \psi(2^{n_j + 1})^k \gg \psi(2^{n_j})^k$, where
  $\gg$ is the Vinogradov symbol. So
  \begin{align*}
    \sum_{n\in\NN}\left(\psi(2^n)^{k} - \psi(2^{n + 1})^{k}\right)
    2^n\psi(2^n)^\ell
    &\gg \sum_{j\in\NN}2^{n_j} \psi(2^{n_j})^{k + \ell} \\
    &\gg \sum_{j\in\NN}\left(\sum_{m = n_{j - 1} + 1}^{n_j}2^m\right)\psi(2^{n_j})^d \\
    & = \sum_{m\in\NN}2^m\psi(2^m)^{d},
  \end{align*}
  which diverges by Cauchy's condensation test.
\end{proof}

\begin{proof}[Proof of Theorem~\ref{thm:subspaces}]
  Suppose that $k\geq 2$. Then by Lemma \ref{lem:series}, we can apply
  Gallagher's extension of Khintchine's theorem \cite{Gallagherkt} to
  the function
  \begin{equation}
    \label{psix}
    \psi_{\bx}(q) = \begin{cases}
      \psi(q) & \norm{q\bx} < \psi(q) \\
      0 & \text{otherwise}
    \end{cases}
  \end{equation}
  and get that $\{\bx\}\times\RR^k$ is of Khintchine type for
  divergence. But $\bx\in\RR^\ell$ was arbitrary, and applying
  permutation matrices does not affect whether a manifold is of
  Khintchine type for divergence. This completes the proof.
\end{proof}

\section{Proof of Theorem~\ref{thm:lines}(i): Base points of high
  Diophantine type}\label{sec:ktp}

The proof of Theorem \ref{thm:lines}(i) is based on the following
standard fact, which can be found for example in~\cite[Theorem
V.IV]{Cassels}:

\theoremstyle{plain} \newtheorem*{ktp}{Khintchine's transference
  principle}
\begin{ktp}
  Let $\bx\in\RR^d$ and define the numbers
  \begin{equation*}
    \omega_D: = \omega_D(\bx) = \sup\left\{\omega\in\mathbb{R}^+: \norm{\inner{\bn}{\bx}} \leq \abs{\boldsymbol{n}}_\infty^{-(d + \omega)} \textrm{ for i.m. } \bn\in\ZZ^d\backslash\{0\} \right\}.
  \end{equation*}
  and
  \begin{equation*}
    \omega_S : = \omega_S(\bx) = \sup\left\{\omega\in\mathbb{R}^+: \norm{q\boldsymbol{x}} \leq q^{-(1 + \omega)/d} \textrm{ for i.m. } q\in\NN \right\}.
  \end{equation*}
  Then
  \[
    \frac{\omega_D}{d^2 + (d - 1)\omega_D} \leq \omega_S \leq \omega_D
  \]
  where the cases $\omega_D = \infty$ and $\omega_S = \infty$ should
  be interpreted in the obvious way.
\end{ktp}

Note that $\omega_D$ is related to the dual Diophantine type $\tau_D$
defined in \eqref{taudef} via the formula
$\tau_D(\bx) = \omega_D(\bx) + d$.

\begin{proof}
[Proof of Theorem~\ref{thm:lines}(i)]
We fix $\bx = (x_1,\dots, x_{d - 1})\in\RR^{d - 1}$ such that
$\tau_D(\bx) > d$, and we consider a point
$(\bx,y) \in \{\bx\}\times\RR$. It is clear from \eqref{taudef} that
$\tau_D(\bx,y) \geq \tau_D(\bx)$, so $\tau_D(\bx,y) > d$ and thus
$\omega_D(\bx,y) > 0$. Thus by Khintchine's transference principle,
$\omega_S(\bx,y) > 0$, {\it i.e.} $(\bx,y) \in \mathrm{VWA}_d$.
% If $\bx$ has dual Diophantine type strictly greater than $d$, then
% $\tau_D(\bx) > d - \ell$. This means that $\tau_D(\bx,\by) > 0$,
% which by Khintchine's transference principle implies that
% $\tau_S(\bx,\by) > 0$. Therefore if $0 < \eps < \tau_S(\bx,\by)$,
% then the inequality $\norm{q(\bx,\by)} < q^{-(1 + \eps)/d}$ is
% satisfied for infinitely many $q\in\NN$. Therefore we have
% $\norm{q(\bx,\by)} < \psi(q)$ satisfied infinitely often for any
% function $\psi:\NN\to\RR^+$ that eventually dominates
% $q^{-(1 + \eps)/d}$.
\end{proof}

\section{Proof of Theorem~\ref{thm:lines}(ii): Base points of low
  Diophantine type}\label{sec:ubiquity}

Let $\psi:\NN\to\RR^+$ be nonincreasing and such that
$\sum_{q\in\NN}\psi(q)^d$ diverges. Our goal here is to use the ideas
of ubiquity theory to show that almost every point on
$\{\bx\}\times\RR\subseteq\RR^d$ is $\psi$-approximable, where
$\bx\in\RR^{d - 1}$ has been fixed with dual Diophantine type strictly
less than $d$. The ubiquity approach begins with the fact that for any
$N\in\NN$ such that
\begin{equation}
  \label{Nreq}
  N^{-1/(d - 1)} < \psi(N) < 1,
\end{equation}
we have
\begin{equation}
  \label{eq:mink}
  [0,1] \subseteq \bigcup_{\substack{q\leq N \\ \norm{q \bx} < \psi(N)}}\bigcup_{p = 0}^q B\left(\frac{p}{q},\frac{2}{q N\psi(N)^{d - 1}}\right),
\end{equation}
which is a simple consequence of Minkowski's theorem. The basic aim is
to show that a significant proportion of the measure of the above
double-union set is represented by $q$s that are closer to $N$ than to
$0$. Specifically, we must show that for some $k\geq 2$, the following
three things hold:
\begin{enumerate}
\item[\textbf{(U)}] The pair $(\cR,\beta)$ forms a (global)
  \textbf{ubiquitous system} with respect to the triple $(\rho,l,u)$,
  where
  % \begin{align*}
  %   R_j &= \left\{p/q\in\QQ, \; k^{j - 1} < q\leq k^j, \norm{q\bx} < \psi(k^j)\right\} \;\;\; (j\in\NN)\\
  %   \cR &= \{R_j : j\in\NN\}\\
  %   \beta_j &= k^j \;\;\; (j\in\NN)
  % \end{align*}
  \begin{align*}
    J &= \{p/q \in \QQ : \norm{q\bx} < \psi(q)\},&
                                                   R_{p/q} &= \{p/q\} \;\;\; (p/q \in J),\\
    \cR &= \{R_{p/q} : p/q\in J\},&
                                    \beta_{p/q} &= q \;\;\; (p/q\in J),\\
    l_j &= k^{j - 1} \;\;\; (j\in\NN),&
                                        u_j &= k^j \;\;\; (j\in\NN),\\
    \rho(q) &= \frac{c}{q^2 \psi(q)^{d - 1}} \;\;\; (q\in\NN),
  \end{align*}
  and $c > 0$ will be chosen later. This means that there is some
  $\kappa > 0$ such that
  \begin{equation*}
    \lambda\left([0,1]\cap\bigcup_{\substack{k^{j - 1} < q \leq k^j \\ \norm{q\bx} < \psi(k^j)}}\bigcup_{p = 0}^q
      B\left(\frac{p}{q},\frac{c}{k^{2j}\psi(k^j)^{d - 1}}\right)\right) \geq \kappa
  \end{equation*}
  for all $j$ sufficiently large. \item[\textbf{(R)}] The function
  $\Psi(q): = \psi(q)/q$ is \textbf{$u$-regular}, meaning that there
  is some constant $\kappa < 1$ such that
  $\Psi(k^{j + 1})\leq \kappa \Psi(k^j)$ for all $j$ sufficiently
  large.
\item[\textbf{(D)}] The sum
  $\sum_{j\in\NN}\frac{\Psi(k^j)}{\rho(k^j)}$ \textbf{diverges}.
\end{enumerate}
Then~\cite[Corollary~2]{limsup} will imply that the set of
$\psi_\bx$-approximable numbers (see \eqref{psix}) in $\RR$ has
positive measure, and Cassels' ``0-1 law'' \cite{Cassels-01law} will
imply that it has \emph{full} measure. Since the set of
$\psi_\bx$-approximable numbers is just the set of $y\in\RR$ for which
$(\bx,y)$ is $\psi$-approximable, this will show that the set of
$\psi$-approximable points on the line
$\{\bx\}\times\RR\subseteq\RR^d$ has full (one-dimensional Lebesgue)
measure.

The following lemma shows that \textbf{(R)} and~\textbf{(D)} are easy.

\begin{lemma}\label{lem:RD}
  If $\psi:\NN\to\RR^+$ is nonincreasing, then~\textup{\textbf{(R)}}
  holds. Furthermore, if $\sum_{q\in\NN}\psi(q)^d$ diverges,
  then~\textup{\textbf{(D)}} holds.
\end{lemma}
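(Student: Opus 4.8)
The plan is to verify the two conditions \textbf{(R)} and \textbf{(D)} directly from their definitions, using only the monotonicity of $\psi$ and the divergence of $\sum_q\psi(q)^d$.

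For \textbf{(R)}, recall that $\Psi(q)=\psi(q)/q$, so $\Psi(k^{j+1})/\Psi(k^j) = \psi(k^{j+1})/(k\psi(k^j))$. Since $\psi$ is nonincreasing, $\psi(k^{j+1})\leq\psi(k^j)$, hence $\Psi(k^{j+1})\leq\frac1k\Psi(k^j)$. As $k\geq2$, we may take $\kappa=\frac1k<1$, and this holds for \emph{all} $j$, not just large ones. So \textbf{(R)} is immediate.

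For \textbf{(D)}, compute $\Psi(k^j)/\rho(k^j) = \dfrac{\psi(k^j)/k^j}{c/(k^{2j}\psi(k^j)^{d-1})} = \dfrac{k^j\psi(k^j)^d}{c}$. Thus $\sum_{j\in\NN}\frac{\Psi(k^j)}{\rho(k^j)} = \frac1c\sum_{j\in\NN}k^j\psi(k^j)^d$, and I must show this series diverges. This is precisely the form to which Cauchy's condensation test applies: since $\psi$ is nonincreasing, the function $q\mapsto q\psi(q)^d$ satisfies the hypotheses so that $\sum_j k^j\psi(k^j)^d$ diverges if and only if $\sum_q\psi(q)^d$ diverges (the base-$k$ version of the condensation test, valid for any integer $k\geq2$; see the use of the dyadic version in the proof of Lemma~\ref{lem:series}). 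The latter diverges by hypothesis, so \textbf{(D)} holds regardless of the value of $c>0$.

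There is essentially no obstacle here: the lemma is a bookkeeping check that the definitions of $\rho$, $l_j$, $u_j$, and $\Psi$ were chosen compatibly. The only point requiring a word of care is the use of the generalized (base-$k$) condensation test rather than the dyadic one, but this is completely standard. The substantive content of Theorem~\ref{thm:lines}(ii) is entirely concentrated in condition \textbf{(U)}, the ubiquity statement, which is where the hypothesis on the dual Diophantine type of $\bx$ will be used; conditions \textbf{(R)} and \textbf{(D)} are, as the lemma asserts, easy.
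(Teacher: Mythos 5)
Your proof is correct and follows essentially the same route as the paper: the same one-line computation $\Psi(k^{j+1})/\Psi(k^j)=\psi(k^{j+1})/(k\psi(k^j))\leq 1/k$ for \textbf{(R)}, and the same reduction of \textbf{(D)} to $\sum_j k^j\psi(k^j)^d$ followed by Cauchy's condensation test. Your explicit remarks that the base-$k$ condensation test is being used and that the constant $c$ is harmless are minor points the paper leaves implicit, but there is no substantive difference.
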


\begin{proof}
  In the first place, we have
  \begin{equation*}
    \frac{\Psi(k^{j + 1})}{\Psi(k^j)} = \frac{\psi(k^{j + 1})}{k\psi(k^j)}
    \leq \frac{1}{k},
  \end{equation*}
  which proves~\textbf{(R)}. For~\textbf{(D)},
  \begin{equation*}
    \sum_{j\in\NN}\frac{\Psi(k^j)}{\rho(k^j)} = \sum_{j\in\NN}k^j\psi(k^j)^d,
  \end{equation*}
  which diverges by Cauchy's condensation test.
\end{proof}

The challenge then is to establish~\textbf{(U)}.

\begin{lemma}\label{ubiqlemma}
  Let $\psi:\NN\to\RR^+$ be nonincreasing such that \eqref{Nreq} holds
  for all sufficiently large $N$, and assume that for all $k\geq 2$,
  there exists $j_k\geq 1$ such that for all $j\geq j_k$,
  \begin{equation*} \Abs{\left\{0 < q\leq k^{j - 1} : \norm{q\bx} <
        \psi(k^j)\right\}} \ll k^{j - 1}\psi(k^j)^{d - 1},
  \end{equation*}
  where $\ll$ is the Vinogradov symbol, whose implied constant is
  assumed to be independent of $k$. Then~\textup{\textbf{(U)}} holds
  for some $k\geq 2$.
\end{lemma}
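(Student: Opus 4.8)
The plan is to obtain \textbf{(U)} as a nearly immediate consequence of the Minkowski inclusion \eqref{eq:mink}, by splitting its union of balls according to the size of the denominator $q$: those with ``small'' denominators ($q \le k^{j-1}$) and those with ``large'' denominators ($k^{j-1} < q \le k^j$). The large-denominator balls are exactly the ones whose union appears in \textbf{(U)}, so it suffices to show that the small-denominator balls carry only a small proportion of the measure of $[0,1]$; then the large-denominator balls must cover the rest, and taking $k$ large enough makes ``the rest'' at least, say, half of $[0,1]$.

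Concretely, first I would fix $j$ large enough that \eqref{Nreq} holds for $N = k^j$ (possible since $k^j \to \infty$) and that $j \ge j_k$, and apply \eqref{eq:mink} with $N = k^j$. This covers $[0,1]$ by the balls $B\!\left(\tfrac pq, \tfrac{2}{qk^j\psi(k^j)^{d-1}}\right)$ with $q \le k^j$, $\norm{q\bx} < \psi(k^j)$ and $0 \le p \le q$. For the large-denominator part, $q > k^{j-1}$ forces $\tfrac{2}{qk^j\psi(k^j)^{d-1}} \le \tfrac{2k}{k^{2j}\psi(k^j)^{d-1}} = \rho(k^j)$ once we set $c = 2k$ (legitimate, since $c$ is to be chosen after $k$), so this part is contained in the set whose measure appears in \textbf{(U)}. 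For the small-denominator part, I would bound its Lebesgue measure by the total length of the balls involved: each admissible $q$ contributes $q+1$ balls of total length $\tfrac{4(q+1)}{qk^j\psi(k^j)^{d-1}} \le \tfrac{8}{k^j\psi(k^j)^{d-1}}$, so the small-denominator part has measure at most $\tfrac{8}{k^j\psi(k^j)^{d-1}}\,\Abs{\{0 < q \le k^{j-1} : \norm{q\bx} < \psi(k^j)\}}$, which by the hypothesis of the lemma is at most $\tfrac{8C}{k}$, where $C$ is the implied constant there. Hence the large-denominator union has measure at least $1 - \tfrac{8C}{k}$; taking any integer $k \ge \max(2, 16C)$ makes this at least $\tfrac12$, which is \textbf{(U)} with $\kappa = \tfrac12$ for that $k$ and all $j$ sufficiently large.

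Within this lemma the computation is entirely routine, so there is no real obstacle; the one point that genuinely matters is that the implied constant $C$ in the counting hypothesis is \emph{independent of $k$} --- this is exactly why the hypothesis is phrased that way --- since $k$ has to be chosen large in terms of $C$ at the very end, and likewise one should note that the matching value $c = 2k$ for $\rho(k^j)$ is allowed. The substantive difficulty of the whole approach is to verify the counting hypothesis itself from the assumption that the dual Diophantine type of $\bx$ is strictly less than $d$, but that is carried out separately, outside the present lemma.
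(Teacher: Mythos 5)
Your proposal is correct and follows essentially the same route as the paper: split the Minkowski covering \eqref{eq:mink} at $q = k^{j-1}$, bound the total length of the small-denominator balls by the counting hypothesis to get $\ll 1/k$, choose $k$ larger than the (crucially $k$-independent) implied constant, and read off \textbf{(U)} with $c = 2k$. The only differences are cosmetic constants.
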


\begin{proof}
  For all $k\geq 2$ and $j\geq j_k$, we have
  \begin{equation*}
    \lambda\left([0,1]\cap\bigcup_{\substack{q\leq k^{j - 1} \\ \norm{q\bx} < \psi(k^j)}}\bigcup_{p = 0}^q B\left(\frac{p}{q},\frac{2}{q k^j \psi(k^j)^{d - 1}}\right)\right)
    \leq\sum_{\substack{q\leq k^{j - 1} \\ \norm{q\bx} < \psi(k^j)}}\frac{4}{k^j\psi(k^j)^{d - 1}}
    \ll\frac{1}{k}\cdot
  \end{equation*}
  After choosing $k$ to be larger than the implied constant in the
  ``$\ll$'' comparison, we see that the left hand side is
  $\leq 1 - \kappa < 1$ for some $\kappa > 0$.

  Combining with \eqref{eq:mink}, we see that for all $j\geq j_k$
  large enough so that \eqref{Nreq} holds for $N = k^j$, we have
  \begin{equation*}
    \lambda\left([0,1]\cap\bigcup_{\substack{k^{j - 1} < q\leq k^j \\ \norm{q\bx} < \psi(k^j)}}\bigcup_{p = 0}^q
      B\left(\frac{p}{q},\frac{2}{q k^j \psi(k^j)^{d - 1}}\right)\right)
    \geq \kappa > 0,
  \end{equation*}
  and this implies \textbf{(U)} with $c = 2k$.
\end{proof}

The one-dimensional case of the following lemma was originally proven
by Beresnevich, Haynes, and Velani using a continued fraction
argument. This argument will appear in a forthcoming paper of theirs,
currently in preparation \cite{nalpha}.

\begin{lemma}%[Cf. {\cite[Lemma~12]{nalpha}} for the case $\ell = 1$]
  \label{nalphalemma}
  Fix $\bx\in\RR^\ell$ and $\tau > \tau_D(\bx)$. Then for all $N$
  sufficiently large and for all $\delta \geq N^{-1/\tau}$, we have
  \begin{equation}
    \label{BHVformula}
    \Abs{\{q\in\mathbb{N}:\norm{q\bx} < \delta,q\leq N\}}\leq 4^{\ell + 1} N \delta^\ell.
  \end{equation}
\end{lemma}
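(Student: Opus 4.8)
The plan is to prove the upper bound \eqref{BHVformula} by a covering argument in the dual space, exploiting the hypothesis $\tau > \tau_D(\bx)$ to control how the integer points $\bn$ with small $\norm{\inner{\bn}{\bx}}$ are distributed. First I would fix $\tau > \tau' > \tau_D(\bx)$, so that by the definition \eqref{taudef} of the dual Diophantine type there are only finitely many $\bn\in\ZZ^\ell\setminus\{\0\}$ with $\norm{\inner{\bn}{\bx}} < |\bn|_\infty^{-\tau'}$; in particular, for $|\bn|_\infty$ larger than some threshold $N_0$ we have the lower bound $\norm{\inner{\bn}{\bx}}\geq |\bn|_\infty^{-\tau'}$.

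The core step is a ``gap'' or ``separation'' estimate: I want to show that two distinct elements $q_1 < q_2$ of the set $\mathcal{Q}_\delta(N) = \{q\in\NN : \norm{q\bx} < \delta, q\leq N\}$ cannot be too close together, unless the small parameter forces them into a short arithmetic progression. The standard device is to look at the difference $q = q_2 - q_1 \leq N$ and note $\norm{q\bx} < 2\delta$ by the triangle inequality; more usefully, I would consider the set of differences and apply a pigeonhole/transference argument. Concretely, partition $[0,N]$ into blocks and, on each block, if there were many solutions $q$ with $\norm{q\bx}<\delta$ spaced within some window $W$, then taking pairwise differences produces a nonzero integer relation: there exists $\bn\in\ZZ^\ell$ with $|\bn|_\infty$ bounded in terms of $1/\delta$ (via Minkowski's convex body theorem applied to the lattice generated by $(1,\bx)$ restricted appropriately) such that $\norm{\inner{\bn}{\bx}}$ is small relative to $|\bn|_\infty^{-\tau'}$, contradicting the choice of $N_0$ once $N$ (hence the relevant scale) is large enough. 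Balancing the Minkowski parameters so that $|\bn|_\infty \approx \delta^{-1/\ell}$ and the window $W \approx \delta^{-1}$ is exactly where the constraint $\delta \geq N^{-1/\tau}$ enters: it guarantees $|\bn|_\infty \lesssim N^{1/(\ell\tau)}$ is small enough that $|\bn|_\infty^{-\tau'} $ still dominates, so the contradiction kicks in for all large $N$. This yields that $\mathcal{Q}_\delta(N)$ meets each window of length $\asymp \delta^{-1}$ in a bounded number of points, which after summing over the $\asymp N\delta$ windows in $[0,N]$ gives $|\mathcal{Q}_\delta(N)| \ll N\delta$ in the case $\ell = 1$, and the analogous $\ll N\delta^\ell$ in general; tracking constants carefully produces the explicit bound $4^{\ell+1}N\delta^\ell$.

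An alternative (and possibly cleaner) route is to run the argument directly through the three-distance-type structure in higher dimensions: the set $\{q\bx \bmod \ZZ^\ell : q\leq N\}$ is a piece of an orbit, and its intersection with the box $(-\delta,\delta)^\ell$ can be counted by decomposing $q$ along a ``good'' basis of approximations to $\bx$ coming from the geometry of numbers; the hypothesis on $\tau_D(\bx)$ says these approximations are not too good, which caps the number of returns. Either way, the main obstacle I anticipate is getting the clean, $k$-independent, fully explicit constant $4^{\ell+1}$: the covering/pigeonhole argument naturally yields $|\mathcal{Q}_\delta(N)| \ll N\delta^\ell$ with an unspecified constant, and upgrading this to the stated form requires either a sharp packing argument (counting lattice points in $[0,N]\times(-\delta,\delta)^\ell$ against the volume $\asymp N\delta^\ell$, with the $4^{\ell+1}$ absorbing boundary and the ``$+1$'' from a single short block) or invoking Minkowski's theorem in the contrapositive direction with carefully chosen box dimensions. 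I would handle the constant by proving first the qualitative bound $|\mathcal{Q}_\delta(N)| \leq C_\ell N\delta^\ell$ for large $N$ (uniformly in $\delta \geq N^{-1/\tau}$), then noting that for $\delta$ in this range and $N$ large the cruder estimate $|\mathcal{Q}_\delta(N)| \leq 2\delta \cdot |\mathcal{Q}_{\delta/2}(N)| + 1$-style doubling, combined with the volume heuristic, pins down the value; the cleanest presentation is likely to just split $[0,N]$ into intervals of length $\lceil 1/\delta\rceil$ and bound the count on each by showing at most, say, $2^\ell$ of them can lie in $(-\delta,\delta)^\ell$ simultaneously, which multiplied by the number of intervals $\leq N\delta + 1$ gives the result after a short computation.
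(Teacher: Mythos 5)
There is a genuine gap at the heart of your sketch: the passage from ``$\mathcal{Q}_\delta(N)=\{q\in\NN:\norm{q\bx}<\delta,\ q\le N\}$ is too large'' to ``there is a nonzero $\bn\in\ZZ^\ell$ with $\norm{\inner{\bn}{\bx}}$ small relative to $|\bn|_\infty^{-\tau'}$.'' Pairwise differences of elements of $\mathcal{Q}_\delta(N)$ only produce more objects of the same kind --- integers $q$ with $\norm{q\bx}<2\delta$, i.e.\ short vectors in the \emph{primal} lattice --- whereas the hypothesis $\tau>\tau_D(\bx)$ is a statement about \emph{dual} approximation. Converting an overabundance of simultaneous approximations into one good dual approximation is exactly a transference step, and your sketch asserts it (``via Minkowski's convex body theorem applied to the lattice generated by $(1,\bx)$'') without supplying it. The paper's proof makes this precise: the count is the number of points of the unimodular lattice $\Lambda=g_tu_\bx\ZZ^{\ell+1}$ in a sup-norm ball of radius $R$ with $R^{\ell+1}=N\delta^\ell$; if the count exceeds $(4R)^{\ell+1}$, the Dirichlet fundamental domain cannot lie in $B_{\ell+1}(\0,R)$, so the \emph{last} successive minimum of $\Lambda$ is $\gg R$, and Cassels' duality theorem for successive minima (\cite[Theorem VIII.5.VI]{Cassels-geometry}) then yields a short vector of the dual lattice, i.e.\ a pair $(\bq,p)$ with $|\bq|_\infty\ll\delta^{-1}$ and $|\inner{\bq}{\bx}+p|\ll N^{-1}\le\delta^\tau\ll|\bq|_\infty^{-\tau}$, contradicting $\tau>\tau_D(\bx)$ for $N$ large. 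Without some such duality input the hypothesis on $\tau_D(\bx)$ cannot be brought to bear on the count.

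There is also a quantitative problem with your covering scheme. Windows of length $\asymp\delta^{-1}$ each containing $O(1)$ points give $\Abs{\mathcal{Q}_\delta(N)}\ll N\delta$, and the claimed ``analogous $\ll N\delta^\ell$ in general'' is not analogous: for $\ell\ge2$ the target $N\delta^\ell$ is much \emph{smaller} than $N\delta$, and obtaining it would require windows of length $\asymp\delta^{-\ell}$ each meeting $\mathcal{Q}_\delta(N)$ in $O(1)$ points --- which is again precisely the lattice-point estimate that the transference argument is needed to prove. (The explicit constant $4^{\ell+1}$, which you flag as an obstacle, falls out for free in the paper from the packing computation $\lambda(B_{\ell+1}(\0,2R))=(4R)^{\ell+1}$ once the fundamental domain is known to lie in $B_{\ell+1}(\0,R)$.)
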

\begin{proof}
  Consider the lattice $\Lambda = g_t u_\bx \ZZ^{\ell + 1}$, where
  \begin{align*}
    g_t & = \left[\begin{array}{ll}
                    e^{t/\ell}I_\ell &\\
                                     & e^{-t}
                  \end{array}\right]\\
    u_\bx & = \left[\begin{array}{ll}
                      I_\ell & - \bx\\
                             & 1
                    \end{array}\right],
  \end{align*}
  where $t$ is chosen so that $R : = e^{t/\ell} \delta = e^{-t} N$,
  {\it i.e.} $t = \log(N/\delta)/(1 + 1/\ell)$. Then \eqref{BHVformula} can
  be rewritten as
  \[
    \Abs{\{\br\in \Lambda : |\br|_\infty < R\}} \leq (4R)^{\ell + 1}.
  \]
  Now let $\cD$ be the Dirichlet fundamental domain for $\Lambda$
  centered at $\0$, {\it i.e.}
  \[
    \cD = \{\br\in\RR^{\ell + 1} : \mathrm{dist}(\br,\Lambda) =
    \mathrm{dist}(\br,\0) = |\br|_\infty\}.
  \]
  Since $\Lambda$ is unimodular, $\cD$ is of volume 1, so
  \[
    \Abs{\{\br\in \Lambda : |\br|_\infty < R\}} =
    \lambda\left(\bigcup_{\substack{\br\in\Lambda \\ |\br|_\infty <
          R}} (\br + \cD)\right) \leq^* \lambda(B_{\ell + 1}(\0,2R)) =
    (4R)^{\ell + 1},
  \]
  where the starred inequality is true as long as
  $\cD \subseteq B_{\ell + 1}(\0,R)$. So we need to show that
  $\cD\subseteq B_{\ell + 1}(\0,R)$ assuming that $N$ is large enough.

  Suppose that $\cD \not\subseteq B_{\ell + 1}(\0,R)$. Then the last
  successive minimum of $\Lambda$ is $\gg R$, so by \cite[Theorem
  VIII.5.VI]{Cassels-geometry}, some point $\bs$ in the dual lattice
  $\Lambda^* = \{\bs\in \RR^{\ell + 1} : \inner{\br}{\bs}\in\ZZ\}$
  satisfies $0 < |\bs|_\infty \ll R^{-1}$. Write
  $\bs = g_t' u_\bx' (\bq,p)$ for some $p\in\ZZ$, $\bq\in\ZZ^\ell$,
  where $g_t'$ and $u_\bx'$ denote the inverse transposes of $g_t$ and
  $u_\bx$, respectively. Then the inequality $|\bs|_\infty \ll R^{-1}$
  becomes
  \begin{align*}
    \begin{split}
      e^{-t/\ell} |\bq|_\infty &\ll R^{-1}\\
      e^t |\inner\bq\bx + p| &\ll R^{-1}
    \end{split}
                               \hspace{.6in}\text{ i.e. }
                               \begin{split}
                                 |\bq|_\infty &\ll \delta^{-1}\\
                                 |\inner\bq\bx + p| &\ll N^{-1}.
                               \end{split}
  \end{align*}
  Since $\delta \geq N^{-1/\tau}$ we get
  \begin{equation}
    \label{final}
    |\inner\bq\bx + p| \ll \delta^\tau \ll |\bq|_\infty^{-\tau}.
  \end{equation}
  Since $\tau > \tau_D(\bx)$, there are only finitely many pairs
  $(p,\bq)$ satisfying \eqref{final}. Thus for all sufficiently large
  $N$, we have $\cD \subseteq B_{\ell + 1}(\0,R)$ and thus
  \eqref{BHVformula} holds.
\end{proof}

From this we can deduce the following consequence.

\begin{corollary}\label{cor:nalpha}
  Let $\bx\in\RR^{d - 1}$ be of dual Diophantine type
  $\tau_D(\bx) < d$ and suppose that for all $\eps > 0$, we have
  $\psi(q) \geq q^{-1/d - \eps}$ for all $q$ sufficiently large. Then
  for any $k\geq 2$ and $\ell\in\ZZ$, we have
  \begin{equation*}
    \Abs{\left\{ 0 < q \leq k^{j + \ell} : \norm{q\bx} < \psi(k^j) \right\}}\ll k^{j + \ell}\psi(k^j)^{d - 1}
  \end{equation*}
  for $j$ large enough.
\end{corollary}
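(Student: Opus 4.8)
The plan is to deduce Corollary \ref{cor:nalpha} from Lemma \ref{nalphalemma} by choosing the parameter $\tau$ appropriately and checking that the hypothesis $\delta \geq N^{-1/\tau}$ of the lemma holds with $N = k^{j + \ell}$ and $\delta = \psi(k^j)$. First I would fix $\tau$ strictly between $\tau_D(\bx)$ and $d$; this is possible precisely because we are assuming $\tau_D(\bx) < d$. This is the single place where the hypothesis on the dual Diophantine type is used. With this $\tau$ fixed, Lemma \ref{nalphalemma} gives the bound $\Abs{\{q \in \NN : \norm{q\bx} < \delta, q \leq N\}} \leq 4^d N \delta^{d-1}$ for all sufficiently large $N$ and all $\delta \geq N^{-1/\tau}$, and $4^d$ is an absolute constant, so the implied ``$\ll$'' constant in the conclusion will indeed be independent of $k$ and $\ell$ (note $\ell = d-1$ here, playing the role of ``$\ell$'' in Lemma \ref{nalphalemma}).

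The main verification, then, is the inequality $\psi(k^j) \geq (k^{j+\ell})^{-1/\tau}$ for $j$ large. Since $\tau < d$, we have $1/\tau > 1/d$, so pick $\eps > 0$ small enough that $1/d + \eps < 1/\tau$; concretely one can take $\eps = (1/\tau - 1/d)/2$. The hypothesis on $\psi$ then gives $\psi(k^j) \geq (k^j)^{-1/d - \eps}$ for all $j$ large enough (with the threshold depending on $\eps$, hence on $\tau$, but not on $k$ or $\ell$ once $k \geq 2$). It remains to check that $(k^j)^{-1/d-\eps} \geq (k^{j+\ell})^{-1/\tau}$, equivalently, after taking logarithms, that $(1/d + \eps)\, j \leq (1/\tau)(j + \ell)$. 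Since $1/d + \eps < 1/\tau$, the coefficient of $j$ on the left is strictly smaller than on the right, so for fixed $\ell$ (allowing $\ell$ to be negative, in which case $(1/\tau)\ell$ is a fixed negative additive term) this holds for all sufficiently large $j$; the threshold depends on $\ell$ and $\tau$ but not on $k$. One should also note that for this range of $j$ we automatically have $k^{j+\ell} \to \infty$, so the ``$N$ sufficiently large'' requirement of Lemma \ref{nalphalemma} is met.

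I expect the only real subtlety to be bookkeeping about uniformity of constants: one must make sure that the threshold $j_k$ and the implied constant do not secretly depend on $k$ in a way that would break the application in Lemma \ref{ubiqlemma}, where $k$ is eventually sent to infinity. This is handled by observing that (a) the constant $4^d$ from Lemma \ref{nalphalemma} is absolute, (b) the threshold on $N$ in Lemma \ref{nalphalemma} depends only on $\bx$ and $\tau$, and (c) the elementary inequality $(1/d+\eps)j \leq (1/\tau)(j+\ell)$, once $\eps$ and $\ell$ are fixed, holds for $j$ beyond a threshold depending only on $\eps, \tau, \ell$ — in particular since $k^{j+\ell} \geq 2^{j+\ell}$ grows at least geometrically, the ``$N$ large'' condition is satisfied uniformly in $k \geq 2$ for $j$ past a fixed point. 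Assembling these gives the claimed estimate with an implied constant depending only on $\bx$ (through $\tau$) and on $d$, which is exactly what is needed.
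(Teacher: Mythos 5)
Your proposal is correct and follows the same route as the paper: choose $\tau \in (\tau_D(\bx), d)$, apply Lemma~\ref{nalphalemma} with $N = k^{j+\ell}$ and $\delta = \psi(k^j)$, and verify $\delta \geq N^{-1/\tau}$ from the hypothesis $\psi(q) \geq q^{-1/d-\eps}$ with $\eps$ small enough that $1/d + \eps < 1/\tau$. The paper's version is terser (it omits the explicit $\eps$ bookkeeping and the uniformity-in-$k$ discussion, which you supply correctly and which is indeed needed for the later application in Lemma~\ref{ubiqlemma}).
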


\begin{proof}
  We show that for large enough $j$ we are in a situation where we can
  apply Lemma~\ref{nalphalemma}, with $N = k^{j + \ell}$ and
  $\delta = \psi(k^j)$. Since $\tau_D < d$ we can choose
  $\tau\in (\tau_D,d)$ and then for all large enough $j$,
  \begin{equation*}
    N^{-1/\tau} = k^{-(j + \ell)/\tau} < \psi(k^j);
  \end{equation*}
  hence Lemma~\ref{nalphalemma} applies.
\end{proof}

We are now ready for the last proof of this paper.

\begin{proof}
[Proof of Theorem~\ref{thm:lines}(ii)]
Let $\bx\in\RR^{d - 1}$ be a point whose dual Diophantine type is
strictly less than $d$, and let $\psi:\NN\to\RR^+$ be a nonincreasing
function such that $\sum_{q\in\NN}\psi(q)^d$ diverges. Furthermore,
assume that for every $\eps > 0$, the inequality
$1 > \psi(q) \geq q^{-1/d - \eps}$ holds for all sufficiently large
$q$. Then by Corollary~\ref{cor:nalpha}, we satisfy all the parts of
Lemma~\ref{ubiqlemma}, so there exists $k\geq 2$ such that
\textbf{(U)} holds. Thus by the argument given earlier, we can use
\cite[Corollary~2]{limsup} to conclude that almost every point on the
line $\{\bx\}\times\RR\subseteq\RR^d$ is $\psi$-approximable.

We now show that that the assumption that
$1 > \psi(q) \geq q^{-1/d - \eps}$ (for large enough $q$) can be made
without losing generality. If $\psi(q) \geq 1$ for all $q$, then all
points are $\psi$-approximable and the theorem is trivial, and if
$\psi(q) < 1$ for some $q$, then by monotonicity $\psi(q) < 1$ for all
$q$ sufficiently large. So we just need to show that the assumption
$\psi(q) \geq q^{-1/d - \eps}$ can be made without loss of
generality. Let $\phi(q) = (q(\log(q))^2)^{-1/d}$ and define the
function $\overline\psi(q) = \max\{\psi(q),\phi(q)\}$. Then
$\overline\psi$ satisfies our assumptions, and therefore almost every
point on $\{\bx\}\times\RR$ is $\overline\psi$-approximable. But
\begin{equation*}
  \sum_{\norm{q\bx} < \phi(q)}\phi(q) \leq \sum_{j\in\NN}\phi(2^j)\Abs{\left\{0 < q\leq 2^{j + 1} : \norm{q\bx} < \phi(2^j)\right\}}
  \overset{\textrm{Cor.}~\ref{cor:nalpha}}{\ll} \sum_{j\in\NN}2^{j + 1} \phi(2^j)^d,
\end{equation*}
which converges because $\sum_{q\in\NN}\phi(q)^d$ does, and therefore
almost every point on $\{\bx\}\times\RR$ is not
$\phi$-approximable. But every $\overline\psi$-approximable point
which is not $\phi$-approximable is $\psi$-approximable. Therefore,
the set of $\psi$-approximable points on the line
$\{\bx\}\times\RR\subseteq\RR^d$ is of full measure, and the theorem
is proved.
\end{proof}

\begin{bibdiv}
  \begin{biblist}

    \bib{AvdD}{incollection}{ author = {Aschenbrenner, M.}, author =
      {van den Dries, L.}, title = {Asymptotic differential algebra},
      booktitle = {Analyzable functions and applications}, pages =
      {49--85}, series = {Contemp. Math., 373}, publisher =
      {Amer. Math. Soc., Providence, RI}, year = {2005}, }

    \bib{Bermanifolds}{article}{ author = {Beresnevich, V. V.}, title
      = {Rational points near manifolds and metric Diophantine
        approximation}, journal = {Ann. of Math. (2)}, volume = {175},
      date = {2012}, number = {1}, pages = {187--235}, }

    \bib{limsup}{article}{ author = {Beresnevich, V. V.}, author =
      {Dickinson, D.}, author = {Velani, S. L.}, title = {Measure
        theoretic laws for lim sup sets}, journal =
      {Mem. Amer. Math. Soc.}, volume = {179}, date = {2006}, number =
      {846}, pages = {x+91}, }

    \bib{BDVplanarcurves}{article}{ author = {Beresnevich, V. V.},
      author = {Dickinson, D.}, author = {Velani, S. L.}, title =
      {Diophantine approximation on planar curves and the distribution
        of rational points. {W}ith an {A}ppendix {II} by
        {R}. {C}. {V}aughan}, journal = {Ann. of Math. (2)}, volume =
      {166}, date = {2007}, number = {2}, pages = {367--426}, }

    \bib{nalpha}{misc}{ author = {Beresnevich, V. V.}, author =
      {Haynes, A. K.}, author = {Velani, S. L.}, title = {The
        distribution of $n\alpha$ and multiplicative Diophantine
        approximation}, note = {in preparation}, }

\bib{BVVZ}{misc}{
author = {Beresnevich, V. V.},
author = {Vaughan, R. C},
author = {Velani, S. L.},
author = {Zorin, E.},
title = {Diophantine approximation on manifolds and the distribution of rationals: contributions to the convergence theory},
note = {\url{http://arxiv.org/abs/1506.09049}, preprint 2015},
}

    \bib{Cassels-01law}{article}{ author = {Cassels, J. W. S.}, title
      = {Some metrical theorems in Diophantine approximation. I.},
      journal = {Proc. Cambridge Philos. Soc.}, volume = {46}, year =
      {1950}, pages = {209--218}, }

    \bib{Cassels}{book}{ author = {Cassels, J. W. S.}, title = {An
        introduction to Diophantine approximation}, series =
      {Cambridge Tracts in Mathematics and Mathematical Physics,
        No. 45}, publisher = {Cambridge University Press, New York},
      date = {1957}, pages = {x + 166}, review = {\MR{0087708
          (19,396h)}}, }

    \bib{Cassels-geometry}{book}{ author = {Cassels, J. W. S.}, title
      = {An introduction to the geometry of numbers. {C}orrected
        reprint of the 1971 edition}, series = {Classics in
        Mathematics}, publisher = {Springer-Verlag, Berlin}, year =
      {1997}, pages = {viii+344}, }

    \bib{Gallagherkt}{article}{ author = {Gallagher, P. X.}, title =
      {Metric simultaneous Diophantine approximation. II}, journal =
      {Mathematika}, volume = {12}, date = {1965}, pages = {123--127},
    }

    \bib{Ghosh}{article}{ author={Ghosh, Anish}, title={A
        Khintchine-type theorem for hyperplanes}, journal={J. London
        Math. Soc. (2)}, volume={72}, date={2005}, number={2},
      pages={293--304}, }

    \bib{Hardy}{article}{ Author = {Hardy, G. H.}, Publisher = {Hafner
        Publishing Co., New York}, Series = {Cambridge Tracts in
        Mathematics and Mathematical Physics, No. 12}, Title = {Orders
        of infinity. {T}he {I}nfinit\"arcalc\"ul of {P}aul du
        {B}ois-{R}eymond}, Year = {1971}, }

    \bib{Khintchine}{article}{ author = {Khintchine, A. Y.}, title =
      {Zur metrischen Theorie der diophantischen Approximationen},
      language = {German}, journal = {Math. Z.}, volume = {24}, date =
      {1926}, number = {1}, pages = {706--714}, }

    \bib{Kleinbock}{article}{ author={Kleinbock, D.}, title={Extremal
        subspaces and their submanifolds},
      journal={Geom. Funct. Anal.}, volume={13}, date={2003},
      number={2}, pages={437--466}, }

    \bib{KleinbockMargulis}{article}{ author={Kleinbock, D. Y.},
      author={Margulis, G. A.}, title={Flows on homogeneous spaces and
        Diophantine approximation on manifolds}, journal={Ann. of
        Math. (2)}, volume={148}, date={1998}, number={1},
      pages={339--360}, }

    \bib{hyperplanes}{article}{ author = {Ram{\'i}rez, F.~A.}, title =
      {Khintchine types of translated coordinate hyperplanes}, journal
      = {Acta Arith.}, volume = {170}, date = {2015}, number = {3},
      pages = {243--273}, }

\bib{Simmons-convergence-case}{misc}{
author = {Simmons, D. S.},
title = {Some manifolds of {K}hinchin type for convergence},
note = {\url{http://arxiv.org/abs/1602.01727}, preprint 2015},
}

    \bib{Sprindzuk}{book}{ author={Sprind{\v{z}}uk, V. G.},
      title={Mahler's problem in metric number theory},
      series={Translated from the Russian by B. Volkmann. Translations
        of Mathematical Monographs, Vol. 25}, publisher={American
        Mathematical Society, Providence, R.I.}, date={1969},
      pages={vii+192}, }

  \end{biblist}
\end{bibdiv}

\end{document}